\documentclass[12pt,a4paper,oneside]{article}
\usepackage[inner=3cm,outer=3 cm,top=2.5cm,bottom=2.5cm]{geometry}
\usepackage{amssymb}
\usepackage{amsthm} 
\usepackage{amsmath}
\usepackage{graphicx}
\usepackage{xcolor}
\usepackage{t1enc}
\usepackage[utf8]{inputenc}
\usepackage{enumerate}
\usepackage[UKenglish]{babel}
\usepackage{float}
\usepackage[unicode]{hyperref}
\usepackage{epstopdf}
\usepackage{indentfirst}
\usepackage{enumitem}
\setlist{leftmargin=2cm}
\usepackage{booktabs}
\usepackage{upgreek}
\usepackage{bm}
\usepackage{graphicx}
\usepackage{csquotes}
\usepackage{authblk}

\DeclareMathAlphabet{\mymathbb}{U}{bbold}{m}{n}

\hypersetup{
	colorlinks=false,
	linkcolor=red,
	urlcolor=blue,
	citecolor=gray
}						
\newtheorem{dfn}{Definition}[section]
\newtheorem{thm}[dfn]{Theorem}
\newtheorem{claim}[dfn]{Claim}

\newcommand\cH{{\mathcal{H}}}

\begin{document}

\title{On the chromatic number of pseudohemisphere hypergraphs  }
\author{Balázs István Szabó\thanks{Research supported by the EXCELLENCE-24 project no.~151504 Combinatorics and Geometry of the NRDI Fund.}}
\affil{ELTE E\"{o}tv\"{o}s Lor\'{a}nd University, Budapest}
\maketitle
 \begin{abstract}
  A pseudohemisphere hypergraph is a hypergraph $\mathcal{H}$ with an ordered set of vertices $V$ for which there exists an $ABA$-free hypergraph $\mathcal{F}$ on $V$ and a subset $X$ of $V$ such that the hyperedge set of $\mathcal{H}$ is a subset of $\{F\Delta X: F\in \mathcal{F}\cup \overline{\mathcal{F}}\}$. We prove that the chromatic number of pseudohemisphere hypergraphs is at most four.
 \end{abstract}

\section{Introduction}
The chromatic number of hypergraphs defined by halfplanes on a set of points is at most four \cite{3}. We get a natural generalization, if we replace halfplanes by pseudohalfpanes. The chromatic number of pseudohalfplane hypergraphs is also at most four \cite{1}.
Colorings and Helly type theorems for pseudohemisphere hypergraphs were investigated in \cite{1},\cite{2}. It was shown that the vertices of a pseudohemisphere hypergraph can be colored with $k$ colors such that every hyperedge containing at least $4k-3$ vertices contains vertices of all $k$ colors. We show that the vertices of a pseudohemisphere hypergraph can be colored with four colors such that every hyperedge containing at least two vertices contains two different colors. Note that, using the fact that the chromatic number of pseudohalfplane hypergraphs is at most four we can easily obtain eight as an upper bound for the chromatic number of pseudohemisphere hypergraphs, since we can color the vertices of $X$ and $V\setminus X$ using four colors each. We improve this upper bound to four, which is tight. The lower bound is trivial, as $K_{4}$ can be realized as a pseudohalfplane hypergraph and every pseudohalfplane hypergraph is a pseudohemisphere hypergraph.
We begin by recalling some definitions and previous results that will be needed later \cite{1}.
\begin{dfn}
	A hypergraph $\mathcal{F}$ with an ordered set of vertices is called $ABA$-free if it does not contain two hyperedges $A$ and $B$ for which there exist three vertices $v_{1}<v_{2}<v_{3}$ such that $v_{1},v_{3}\in A\setminus B$ and $v_{2}\in B\setminus A$. A vertex $v$ is called unskippable if there exists a hyperedge $F$ such that $v\notin F$ and $v$ is between the vertex of $F$ with the smallest index and the vertex of $F$ with largest index.
\end{dfn}

\begin{claim}\label{unskippable_vertex}\cite{1}
	If $\mathcal{F}$ is an $ABA$-free hypergraph, then every hyperedge of $\mathcal{F}$ contains an unskippable vertex. 
	
\end{claim}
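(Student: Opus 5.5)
The definition as printed seems to say the opposite of what the claim needs: a vertex $v\notin F$ lying between $\min F$ and $\max F$ is naturally called \emph{skipped} by $F$. I will read the definition as intended in \cite{1}: $v$ is \emph{unskippable} if \emph{no} hyperedge $F$ skips it in this sense. I also assume the vertex set is finite and hyperedges are nonempty. Fix a hyperedge $H$ and suppose, for contradiction, that every $v\in H$ is skipped by some hyperedge. The plan is a minimality (descent) argument driven by the $ABA$-free condition.

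\emph{Basic step.} Let $F$ skip $v\in H$. Let $a$ be the largest vertex of $F$ below $v$ and $b$ the smallest vertex of $F$ above $v$. If neither $a$ nor $b$ lay in $H$, the triple $a<v<b$ would have $a,b\in F\setminus H$ and $v\in H\setminus F$, contradicting $ABA$-freeness with $A=F$, $B=H$. Hence $a\in H$ or $b\in H$, so the skipping hyperedge $F$ meets $H$ on at least one side of $v$. In particular the smallest vertex of $H$ (resp.\ the largest) can only be skipped by an $F$ whose neighbour above it (resp.\ below it) lies in $H$.

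\emph{Extremal choice.} Among all triples $(v,F,a)$ as above, with $v\in H$, $F$ skipping $v$ and $a\in F\cap H$ the relevant neighbour, choose one minimizing the distance $|a-v|$ in the order. Say $a<v$; the other case is symmetric. By assumption $a$ is itself skipped by some hyperedge $G$, with neighbours $c<a<d$ in $G$ and $a\notin G$. Applying $ABA$-freeness to $F$ and $G$ on $c<a<d$ shows that $c\in F$ or $d\in F$. Applying it to $G$ and $H$ shows that $c\in H$ or $d\in H$.

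\emph{Case analysis.} If $d<v$, then no vertex of $F$ lies strictly between $a$ and $v$, since $a$ was the nearest one below $v$. So $d\notin F$, which forces $c\in F$. Now I look for a new skipping triple closer than $(v,F,a)$. If $d\in H$, then $G$ skips $a\in H$ with neighbour $d\in H$, and $|d-a|<|v-a|$, which contradicts minimality. If $d\notin H$, then $c\in H$. Then $v\in H\setminus G$ lies between $c$ and $d$, and I check whether this yields an $ABA$ pattern or a closer triple. The other case is $d>v$; then $v$ lies in the span of $G$. If $v\notin G$, the pattern $c<v<d$ with $H$ again gives a closer or a forbidden configuration. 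If $v\in G$, then $a<v$ with $a\in F\setminus G$ and $v\in G\setminus F$, and I would combine this with a vertex of $F$ beyond $v$ (namely $b$) to produce a forbidden $ABA$ triple between $F$ and $G$.

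The main obstacle is this last case bookkeeping. The minimized quantity must strictly decrease in every branch, and it may be necessary to refine it, for instance minimizing the span length of $F$ as a secondary key, or replacing the triple by a pair $(v,F)$ minimizing the number of vertices of $H$ strictly between $v$ and its $H$-side neighbour in $F$. If the distance measure fails in some branch, my first fallback is to induct on $|V|$: delete an extreme vertex of $H$ not needed in the configuration and use the fact that $ABA$-freeness is preserved under taking induced subhypergraphs.
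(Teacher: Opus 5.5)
The paper gives no proof of this claim; it is quoted from [1], so your argument has to stand on its own. Your reading of the definition is the intended one. With the printed wording the claim already fails for $V=\{v\}$, $\mathcal{F}=\{\{v\}\}$. Your basic step is also correct. But the proposal stops where the real work begins. You never verify that every branch yields an $ABA$ pattern or a strictly shorter triple, and as you set it up this is false.

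\emph{The omitted case $d=v$.} Your split $d<v$ / $d>v$ leaves out $d=v$. There $G$ skips $a\in H$ and its upper neighbour $d=v$ lies in $H$, so $(a,G,v)$ is a triple of the same length $v-a$. This happens in genuinely $ABA$-free situations. Take $V=\{1,2,3,4\}$, $F=\{1,2,4\}$, $G=\{1,3,4\}$, $H=\{1,2,3\}$. These three sets are pairwise $ABA$-free, both $(3,F,2)$ and $(2,G,3)$ are minimal triples, and the only unskippable vertex of $H$ is $1$. So no contradiction can be extracted near $a$ and $v$.

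\emph{The branch $d<v$, $d\notin H$.} Your claim that $v\in H\setminus G$ lies between $c$ and $d$ is false, since $v>d$. In fact $a\in H\setminus G$ precedes $d\in G\setminus H$, so $ABA$-freeness of $G,H$ forces $v\in G$. The only new triple, $(a,G,c)$, has length $a-c$, which is unrelated to $v-a$.

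\emph{The branch $d>v$.} The sub-case $v\in G$ is vacuous, because $d$ is the first vertex of $G$ after $a$. In the sub-case $v\notin G$, the candidates $(v,G,d)$ and $(a,G,c)$ have lengths $d-v$ and $a-c$, again not bounded by $v-a$.

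\emph{The fallback induction on $|V|$.} Deleting a vertex only destroys skipping relations. So the vertex supplied by the induction hypothesis may be skipped in $\mathcal{F}$ by a hyperedge whose only vertex on one side of it was the deleted one, and you do not handle this.

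What is missing is a global extremal choice instead of a nearest-neighbour distance. One way to do it:
\begin{enumerate}
\item Take $H$ to be a counterexample with $|H|$ minimal. Then no nonempty hyperedge is a proper subset of $H$, since it would contribute an unskippable vertex of $H$.
\item Use the equivalent form of $ABA$-freeness: for $A,B\in\mathcal{F}$, $A\setminus B$ lies entirely before or entirely after $B\setminus A$. Then every $F$ skipping a vertex of $H$ has $F\setminus H\neq\emptyset$ lying either entirely before $H\setminus F$ or entirely after it. In the first case $\max F\in H$ and $F$ skips exactly the vertices of $H\setminus F$ below $\max F$; the second case is symmetric, with $\min F\in H$.
\item Let $F$ be of the first kind and $G$ of the second kind with $\min G<\max F$. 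Then $\min G\notin F$ and $\max F\notin G$ cannot both hold: any $x\in F\setminus H$ would give the $ABA$ pattern $x<\min G<\max F$.
\item Choose the first-kind $F$ with $\max F$ largest. It exists because $\min H$ can only be skipped by first-kind hyperedges, and then $\max F$ can only be skipped by second-kind ones.
\item Next choose the second-kind $G$ skipping $\max F$ with $\min G$ smallest, then the first-kind hyperedge skipping $\min G$ with largest maximum, and so on.
\end{enumerate}
Using the extremal choices of the previous step, each step either produces an $ABA$ pattern or strictly shrinks the interval $[\min G,\max F]$. That cannot continue forever.
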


If $\mathcal{H}$ is a hypergraph defined by the vertex set $V$ and the hyperedge set $\mathcal{E}$, then we denote the complement hypergraph by $\overline{\mathcal{H}}$, its vertex set is the vertex set of $\mathcal{H}$ and the hyperedge set is $\{V\setminus E: E \in \mathcal{E}\}$. For a given subset of vertices $X\subseteq V$, the subhypergraph induced by $X$ is the hypergraph $\mathcal{H}[X]=(X,\mathcal{E}_{X})$, where $\mathcal{E}_{X}=\{E\cap X: E\in \mathcal{E}\}$.

\begin{dfn}
A hypergraph $\mathcal{H}$ with an ordered set of vertices $V$ is called a pseudohalfplane hypergraph if there exists an $ABA$-free hypergraph $\mathcal{F}$ such that the set of the vertices of $\mathcal{H}$ is the vertex set of $\mathcal{F}$ and the set of hyperedges of $\mathcal{H}$ is a subset of $\mathcal{F}\cup\overline{\mathcal{F}}$. The hyperedges of $\mathcal{H}$ from $\mathcal{F}$ (resp. $\overline{\mathcal{F}}$) are called topsets (resp. bottomsets) and the unskippable vertices of $\mathcal{F}$ (resp. $\overline{\mathcal{F}}$) are called topvertices (resp. bottomvertices). 
Let $T(\mathcal{H})$ denote the set of the topvertices of $\mathcal{H}$ and $B(\mathcal{H})$ the set of the bottomvertices of $\mathcal{H}$.
The vertices that are topvertices or bottomvertices are called extremal vertices and the vertices that are neither topvertices nor bottomvertices are called non-extremal vertices.
\end{dfn}

\begin{thm}\label{pseudohalfplane}\cite{1}
	Let $\mathcal{H}$ be a pseudohalfplane hypergraph. Then a topset (resp. bottomset) intersects the set of the topvertices (resp. bottomvertices) in an interval. Furthermore, if $H\in E(\mathcal{H})$ is a topset (resp. bottemset) and $v\in H$ such that $v\notin T(\mathcal{H})$ (resp. $v\notin B(\mathcal{H})$), then $H$ must contain at least one of the two topvertices before and after $v$ that are closest to $v$. 
\end{thm}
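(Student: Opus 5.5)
We prove both parts of Theorem~\ref{pseudohalfplane} for topsets only. The bottomset case then follows by applying the same argument to $\overline{\mathcal{F}}$, which is again $ABA$-free: if $A,B$ formed an $ABA$ pattern in $\overline{\mathcal{F}}$, then $\overline{B},\overline{A}$ would form one in $\mathcal{F}$. Throughout, we work with the $ABA$-free hypergraph $\mathcal{F}$ and the set $T$ of its unskippable vertices. Since the hyperedges of $\mathcal{H}$ form a subset of $\mathcal{F}\cup\overline{\mathcal{F}}$, we prove both statements for every $H\in\mathcal{F}$. Fix $H\in\mathcal{F}$.

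\emph{Interval property.} Suppose $t_1<t_2<t_3$ are topvertices with $t_1,t_3\in H$ and $t_2\notin H$. Since $t_2$ is unskippable, there is an $F\in\mathcal{F}$ with $t_2\notin F$ and $\min F<t_2<\max F$. The plan is to combine $H$ and $F$ into a forbidden pattern. Set $a=\min F$ and $b=\max F$.
\begin{itemize}
\item If $a\notin H$ or $b\notin H$, then $a$ or $b$ lies in $F\setminus H$ while $t_2\notin F\cup H$. Here the pattern must come from another vertex, so we compare $H$ with $F$ using the outer points $t_1,t_3$. If $t_1\notin F$, then $t_1\in H\setminus F$. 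If also $t_3\notin F$, we would like a point of $F\setminus H$ between $t_1$ and $t_3$.
\item A cleaner route is to choose the witness $F$ for $t_2$ extremal, for instance with $\min F$ as large as possible and $\max F$ as small as possible. Then we case-split on the positions of $a$ and $b$ relative to $t_1$ and $t_3$, and on whether $a,b\in H$.
\end{itemize}
In each case we aim to produce either an $ABA$ pattern for the pair $H,F$ (in one of the two roles) or a contradiction with the unskippability of $t_1$ or $t_3$, used via their own witnesses. The configuration $a<t_1$ and $b>t_3$ with $a,b\notin H$ gives the pattern $F,H,F$ directly on $a<t_1<b$. If instead $a,b\in H$ but the topvertex gap $t_2$ lies outside both sets, the pattern is not immediate. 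In that case we must use the additional vertices forced by $F$ and $H$ to exist. This case analysis is where I expect the main difficulty.

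\emph{Second statement.} Let $v\in H$ with $v\notin T$. Let $t^-<v<t^+$ be the nearest topvertices; the two-sided case is the essential one. Suppose $t^-,t^+\notin H$. Since $H$ contains an unskippable vertex by Claim~\ref{unskippable_vertex}, and by the interval property $H\cap T$ is an interval, that interval lies entirely before $t^-$ or entirely after $t^+$, say after $t^+$. Then $t^+$ lies strictly between $v\in H$ and a vertex of $H$, with $t^+\notin H$, so $t^+$ is skipped by $H$. Moreover, $v$ itself is not unskippable. In particular, every $F\in\mathcal{F}$ with $\min F<v<\max F$ contains $v$, and in particular $H$ spans $v$ only if it contains it, which it does. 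The idea is to take the witness $F$ of $t^+$ and, using that $v$ and all vertices between $t^-$ and $t^+$ are skippable, derive an $ABA$ pattern between $H$ and $F$. For example, if $F$ spans $v$, then $v\in F$; the remaining task is to locate a vertex of $H\setminus F$ on the far side of $t^+$, and to find one on the near side as well.

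Again, the crux is the careful choice of witnesses, namely extremal ones. I would first test all small configurations on four or five points to determine which extremal choice makes every case collapse to a direct $ABA$ contradiction.
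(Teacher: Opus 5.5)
The paper gives no proof of this theorem; it is quoted from its reference [1]. Your proposal therefore has to stand on its own, and as written it does not. The central case analysis is explicitly left open, and it cannot be closed, because you read the printed definition of an unskippable vertex literally. That sentence is missing a negation: as printed it describes a \emph{skippable} vertex. You use it in exactly this inverted sense. From ``$t_2$ is unskippable'' you produce an $F$ with $t_2\notin F$ and $\min F<t_2<\max F$. From $v\notin T$ you conclude that every $F$ spanning $v$ contains $v$, which is actually the defining property of a topvertex.

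Under the literal reading the statement is false, so no extremal choice of witnesses can rescue the plan. On $1<2<3<4<5$ let $\mathcal{F}=\{\{1,2,4,5\},\{1,3,4,5\},\{1,2,3,5\}\}$. For any two distinct members $A,B$ we have $|A\setminus B|=1$, while an $ABA$ pattern needs two vertices of $A\setminus B$, so $\mathcal{F}$ is $ABA$-free. The vertices satisfying the literal condition are $2,3,4$, and the topset $\{1,2,4,5\}$ meets $\{2,3,4\}$ in $\{2,4\}$, which is not an interval. Claim~\ref{unskippable_vertex} also fails under the literal reading, e.g.\ for $\mathcal{F}=\{\{1\}\}$. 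Separately, the one configuration you call direct, $a<t_1<t_3<b$ with $a,b\notin H$, gives the pattern $F,H,F$ only if $t_1\notin F$, which you have not shown.

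With the intended definition the theorem is nearly immediate. That definition is: $t$ is unskippable if \emph{every} $F\in\mathcal{F}$ with $\min F<t<\max F$ contains $t$; it is the one under which Claim~\ref{unskippable_vertex} holds. For the first part, let $t_1<t_2<t_3$ be topvertices with $t_1,t_3\in H\in\mathcal{F}$. Then $H$ has vertices on both sides of $t_2$, so $t_2\in H$; indeed $H\cap T=T\cap[\min H,\max H]$. For the second part, let $v\in H\setminus T$ and suppose neither of the nearest topvertices $t^-<v<t^+$ lies in $H$. A vertex of $H$ below $t^-$ would, together with $v$, force $t^-\in H$; likewise $H$ has no vertex above $t^+$. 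Hence $H$ lies strictly between $t^-$ and $t^+$ and contains no topvertex, contradicting Claim~\ref{unskippable_vertex}. If $t^-$ or $t^+$ does not exist, simply omit that side.

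Your own second-part sketch already contains this: the remark that ``$t^+$ is skipped by $H$'' is, under the correct definition, the contradiction itself. Your reduction of the bottomset case to the $ABA$-free family $\overline{\mathcal{F}}$ is correct. $ABA$-freeness enters only through Claim~\ref{unskippable_vertex}, applied to $\mathcal{F}$ or $\overline{\mathcal{F}}$.
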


\begin{dfn}
A pseudohemisphere hypergraph is a hypergraph $\mathcal{H}$ with an ordered set of vertices $V$ for which there exists an $ABA$-free hypergraph $\mathcal{F}$ on $V$ and a subset $X$ of $V$ such that the hyperedge set of $\mathcal{H}$ is a subset of $\{F\Delta X: F\in \mathcal{F}\cup \overline{\mathcal{F}}\}$.
\end{dfn}

In this paper we will use the abstract definition of pseudohemisphere hypergraphs, but there are nice geometric representations \cite{1}.

\begin{thm}\label{pseudohemisphere}
	Let $\mathcal{H}$ be a pseudohemisphere hypergraph. Then $\mathcal{H}$ admits a proper 4-coloring.
\end{thm}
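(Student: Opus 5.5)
We work with the underlying pseudohalfplane hypergraph $\mathcal{H}'$ on $V$ whose hyperedges are $\mathcal{F}\cup\overline{\mathcal{F}}$. By Claim~\ref{unskippable_vertex} and Theorem~\ref{pseudohalfplane}, $\mathcal{H}'$ has a well-defined set of topvertices $T=T(\mathcal{H}')$ and bottomvertices $B=B(\mathcal{H}')$. A hyperedge of $\mathcal{H}$ has the form $F\Delta X$ or $\overline{F}\Delta X$. Its part inside $V\setminus X$ is $F\setminus X$ (respectively $\overline{F}\setminus X$), and its part inside $X$ is $X\setminus F$ (respectively $X\cap F$). So on $V\setminus X$ the hyperedge behaves like a topset or bottomset of $\mathcal{H}'$, and on $X$ like the complementary set. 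It therefore suffices to choose a $4$-colouring in which every hyperedge with at least two vertices sees two colours. I expect the right proof to be local: I only need two suitable vertices in every hyperedge, and Theorem~\ref{pseudohalfplane} tells me where such vertices lie. The four-colour argument for pseudohalfplanes in \cite{1} works by colouring the extremal vertices along their order and then colouring the non-extremal vertices so that they differ from both neighbouring extremal vertices. I would adapt that scheme.

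\textbf{Step 1: reduce to extremal vertices.} I would classify each vertex by two bits: whether it lies in $X$, and whether it is a topvertex, a bottomvertex, or non-extremal. For a hyperedge $E=F\Delta X$ with $|E|\ge 2$, Theorem~\ref{pseudohalfplane} applied to $F$ and to $\overline{F}$ gives the following. The topvertices in $F$ form an interval of $T$. Every non-top vertex of $F$ has an adjacent topvertex in $F$. The same holds for $\overline{F}$ with respect to $B$. Hence $E$ contains a run of consecutive topvertices that are not in $X$, or a run of consecutive bottomvertices that are in $X$, or else it contains a vertex together with a neighbouring extremal vertex of the appropriate type. I would also assign each non-extremal vertex its nearest topvertex and bottomvertex on each side.

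\textbf{Step 2: colouring.} Let $c_T$ alternate two colours $\{1,2\}$ along $T$, and let $c_B$ alternate $\{3,4\}$ along $B$, with the parities chosen compatibly on $T\cap B$. Since the vertices of $T\cap B$ are the extreme points, they can be handled separately. Vertices in $X$ would then be given the "opposite-role" colouring, so that the relevant runs in Step 1, which are intervals of $T\setminus X$ or of $B\cap X$, alternate. Doing this correctly requires alternating along $T\setminus X$ and along $B\cap X$ separately, and likewise along $T\cap X$ and $B\setminus X$ for the complementary hyperedges $\overline{F}\Delta X$. A non-extremal vertex receives a colour that differs from the colours of the (at most four) relevant nearest extremal neighbours. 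Because topvertex neighbours use colours from $\{1,2\}$ and bottomvertex neighbours use colours from $\{3,4\}$, I would argue that only two of these constraints can actually bind at once, so a free colour always exists.

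\textbf{Main obstacle.} The difficult part is reconciling the four alternating sequences, along $T\setminus X$, $T\cap X$, $B\setminus X$ and $B\cap X$, when a vertex lies in both $T$ and $B$. A second difficulty is a hyperedge that contains exactly one vertex of $X$ and one vertex of $V\setminus X$, where the interval structure gives nothing. For the second case I would try to show, using $ABA$-freeness, that such a pair consists of adjacent extremal vertices of different types, which then receive colours from the two different pairs $\{1,2\}$ and $\{3,4\}$. If the naive parity choice fails, I would fall back on induction on $|V|$: delete an extremal vertex, colour the rest, and recolour using the fact that only boundedly many hyperedges become newly two-element.
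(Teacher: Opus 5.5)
Your proposal does not reach a proof. The step that carries all the difficulty, namely hyperedges $\{u,w\}$ with $u\in V\setminus X$ and $w\in X$, is left open, and the colouring you sketch cannot handle it.

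A first problem is the decomposition. You work with the top- and bottomvertices $T,B$ of $\mathcal{F}\cup\overline{\mathcal{F}}$ on all of $V$. Take $E=F\Delta X$ and a vertex $v\in F\setminus X$ with $v\notin T$. Theorem~\ref{pseudohalfplane} only guarantees that $F$ contains one of the two vertices of $T$ nearest to $v$, and that vertex may lie in $X$, i.e.\ outside $E$. So Step~1 does not follow, and the ``four alternating sequences'' and the ``opposite-role'' colouring are artefacts of this choice. The right objects are the induced hypergraphs $\mathcal{H}_1=(\mathcal{F}\cup\overline{\mathcal{F}})[V\setminus X]$ and $\mathcal{H}_2=(\mathcal{F}\cup\overline{\mathcal{F}})[X]$. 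These are pseudohalfplane hypergraphs in their own right, since ABA-freeness survives restriction. Here $E\setminus X$ is a topset (bottomset) of $\mathcal{H}_1$ and $E\cap X$ is a bottomset (topset) of $\mathcal{H}_2$. So every hyperedge with at least two vertices on one side is controlled by that side's own top and bottom sequences.

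The cross pairs are the real issue, and they are not ``adjacent extremal vertices''. By Claim~\ref{top-bottom}, $u$ is a topvertex of $\mathcal{H}_1$ and $w$ a bottomvertex of $\mathcal{H}_2$, or vice versa, and one vertex can have many partners. Your split ($\{1,2\}$ alternating on topvertices, $\{3,4\}$ on bottomvertices) does make such a pair bichromatic as long as it avoids the first and last vertices of each side. It breaks exactly at those vertices, which are both top- and bottomvertices.

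Take $a<x_1<\dots<x_6<b$, $X=\{x_1,\dots,x_6\}$ and $\mathcal{F}=\{V\setminus\{x_2,b\},\,V\setminus\{x_3,b\},\,\{x_4,b\},\,\{x_5,b\}\}$. This family is ABA-free: any two members differ by singletons on both sides, or one of the two differences is $\{b\}$ with $b$ the last vertex. Then $\{a,x_2\},\{a,x_3\},\{a,x_4\},\{a,x_5\}$ are hyperedges of $\mathcal{H}$. In $\mathcal{H}_2$ the vertices $x_2,x_3$ are consecutive bottomvertices and $x_4,x_5$ are consecutive topvertices. So your colouring gives $x_2,\dots,x_5$ all four colours and leaves none for $a$. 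With your global $T,B$, moreover, $x_3$ and $x_5$ are not even extremal. The fallback induction does not help either, since a vertex may lie in arbitrarily many such pairs.

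What is missing is a global mechanism coupling the two sides. In the paper it is the non-crossing property (Claim~\ref{non-crossing}): for two cross pairs of the same type, $u_1<u_2$ forces $w_1>w_2$, by ABA-freeness. This makes an auxiliary graph planar. On each side the extremal vertices lie on a cycle formed by the top path and the bottom path, with one hub vertex adjacent to all of them standing for every non-extremal vertex. The cross pairs, together with the edges joining the first vertex of each side to the last vertex of the other, are drawn as non-crossing edges between the two cycles. The Four Colour Theorem then finishes the proof.
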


\section{Proof of Theorem \ref{pseudohemisphere}}
\begin{proof}	

	We can assume that $\mathcal{H}=\{F\Delta X :F\in \mathcal{F}\cup \overline{\mathcal{F}} \}$. Let $Y$ denote the vertex set $V\setminus X$.
    Take the induced subhypergraphs $\mathcal{H}_{1}=(\mathcal{F}\cup\mathcal{\overline{F}})[Y]$ and $\mathcal{H}_{2}=(\mathcal{F}\cup\overline{\mathcal{F}})[X]$.
	Then $\mathcal{H}_{1}$ and $\mathcal{H}_{2}$ are pseudohalfplane hypergraphs. Each hyperedge $H\in \mathcal{H}$ can be written the following way: $H=(F_H\cap Y)\cup (X\setminus (F_H\cap X))$ for some $F_H \in \mathcal{F}\cup\mathcal{\overline{F}}$, therefore $F_{H}=(H\cap Y)\cup (X\setminus (H\cap X))$. If $F_{H}$ is a topset (resp. bottomset), then $H\cap Y$ is a topset (resp. bottomset) in $\mathcal{H}_{1}$ and $H\cap X$ is a bottomset (resp. topset) in $\mathcal{H}_{2}$. We can partition the set of hyperedges of $\mathcal{H}$ with at least two vertices into two parts according to whether they intersect $Y$ or $X$ in at least two vertices or both of them in exactly one vertex. If we give a proper 4-coloring for the hypergraphs $\mathcal{H}_{1}$ and $\mathcal{H}_{2}$, then every hyperedge which intersects $Y$ or $X$ in at least two vertices is colored properly. To achieve this, thanks to Theorem $\ref{pseudohalfplane}$, for each of $\cH_1$ and $\cH_2$ it is sufficient to give a $4$-coloring in which there is no two consecutive topvertices nor two consecutive bottomvertices that have the same color and every non-extremal vertex has a different color from the two topvertices and the two bottomvertices before and after it that are closest to it. We will give a proper 4-coloring of the hypergraphs $\mathcal{H}_{1}$ and $\mathcal{H}_{2}$ in which the extremal vertices of $\mathcal{H}_{1}$ and $\mathcal{H}_{2}$ are colored with three colors such that no two consecutive topvertices nor two consecutive bottomvertices have the same color and we color the non-extremal vertices with the fourth color. This guarantees that the hyperedges in $\mathcal{H}_{1}$ and $\mathcal{H}_{2}$ with at least two vertices are colored properly. 

    \begin{claim}\label{top-bottom}
    If a hyperedge $H$ intersects $Y$ and $X$ in exactly one vertex and $F_{H}$ is a topset (resp. bottomset), then $H\cap Y$ is a topvertex (resp. bottomvertex)  of $\mathcal{H}_{1}$ and $H\cap X$ is a bottomvertex (resp. topvertex) of $\mathcal{H}_{2}$.
    \end{claim}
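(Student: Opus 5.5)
The approach is to apply Claim \ref{unskippable_vertex} to the single hyperedge of the appropriate $ABA$-free hypergraph, after restricting it to $Y$ or to $X$. Suppose $F_H\in\mathcal{F}$ is a topset; the bottomset case is symmetric, with $\mathcal{F}$ and $\overline{\mathcal{F}}$ exchanged. By the computation above, $H\cap Y=F_H\cap Y$ and $H\cap X=X\setminus(F_H\cap X)$. By hypothesis both are singletons, say $H\cap Y=\{y\}$ and $H\cap X=\{x\}$.

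First I handle $Y$. The topsets of $\mathcal{H}_1$ come from the hypergraph $\mathcal{F}[Y]=\{F\cap Y: F\in\mathcal{F}\}$ on the ordered set $Y$. This hypergraph is still $ABA$-free: a violating triple $v_1<v_2<v_3$ in $Y$ for $A\cap Y,\,B\cap Y$ is also a violating triple in $V$ for $A,B$, since $A\setminus B\supseteq (A\cap Y)\setminus(B\cap Y)$ and likewise for $B\setminus A$. Its unskippable vertices are exactly the topvertices of $\mathcal{H}_1$. Now apply Claim \ref{unskippable_vertex} to the hyperedge $F_H\cap Y=\{y\}$ of $\mathcal{F}[Y]$: it contains an unskippable vertex, and its only element is $y$. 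Hence $y\in T(\mathcal{H}_1)$.

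Next I handle $X$. The set $H\cap X=X\setminus(F_H\cap X)$ is the complement in $X$ of $F_H\cap X\in\mathcal{F}[X]$, so it is a hyperedge of $\overline{\mathcal{F}[X]}$. The bottomvertices of $\mathcal{H}_2$ are the unskippable vertices of this complement family. I need $\overline{\mathcal{F}[X]}$ to be $ABA$-free. This holds because for complements we have $\overline{A}\setminus\overline{B}=B\setminus A$. An $ABA$ pattern for $\overline{A},\overline{B}$ with $v_1,v_3\in B\setminus A$ and $v_2\in A\setminus B$ is therefore a $BAB$ pattern for $A,B$, which is an $ABA$ pattern with the roles of the two sets swapped. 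So $\overline{\mathcal{F}[X]}$ is $ABA$-free, and Claim \ref{unskippable_vertex} applied to the singleton hyperedge $\{x\}$ gives $x\in B(\mathcal{H}_2)$.

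The only real point of care is the bookkeeping of which family plays the role of $\mathcal{F}$ inside each of $\mathcal{H}_1$ and $\mathcal{H}_2$. One must check that restriction to a subset and complementation both preserve $ABA$-freeness, and that the notions of topvertex and bottomvertex in $\mathcal{H}_1$ and $\mathcal{H}_2$ are taken with respect to these induced families. Both checks are short. After them, the claim is simply Claim \ref{unskippable_vertex} applied to a one-element hyperedge.
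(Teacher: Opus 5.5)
Your proof is correct and follows the argument the paper leaves implicit. Just before the claim, the paper notes that $H\cap Y$ is a topset of $\mathcal{H}_{1}$ and $H\cap X$ is a bottomset of $\mathcal{H}_{2}$ (resp.\ the reverse), and by Claim \ref{unskippable_vertex} a singleton hyperedge of an $ABA$-free family consists of an unskippable vertex. Your checks that restricting to $Y$ or $X$ and taking complements preserve $ABA$-freeness fill in details the paper takes for granted.
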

    
	Let $a$ be the first vertex of $\mathcal{H}_{1}$ and $b$ be the last vertex of $\mathcal{H}_{1}$. Similarly let $c$ be the first vertex of $\mathcal{H}_{2}$ and $d$ be the last vertex of $\mathcal{H}_{2}$. If $F$ is a hyperedge of $\mathcal{F}$ (resp. $\overline{\mathcal{F}}$) such that $F\cap Y$ is a single vertex $v$ which is distinct from $a$ and $b$, then $v$ is a topvertex (resp. bottomvertex) of $\mathcal{H}_{1}$ and $\overline{F}\cap Y=Y\setminus \{v\}$. As $a,b\in Y\setminus \{v\}$, Theorem \ref{pseudohalfplane} implies that $v$ cannot be a bottomvertex (resp. topvertex) of $\mathcal{H}_{1}$. Similarly, if $F$ is a hyperedge of $\mathcal{F}$ (resp. $\overline{\mathcal{F}}$) such that $F\cap X$ is a single vertex $v$ which is distinct from $c$ and $d$, then $v$ is a bottomvertex (resp. topvertex) of $\mathcal{H}_{2}$ and $\overline{F}\cap Y=Y\setminus \{v\}$. Again, as $a,b\in Y\setminus \{v\}$, Theorem \ref{pseudohalfplane} implies that $v$ cannot be a topvertex (resp. bottomvertex) of $\mathcal{H}_{2}$.

	We can assume that apart from $a$ and $b$, there is no vertex of $\mathcal{H}_{1}$ which is both a topvertex and a bottomvertex of $\mathcal{H}_{1}$. If there would be such a vertex $v$ then we could add $\{v\}$ as a topset and $V(\mathcal{H}_{1})\setminus \{v\}$ as a bottomset to the hypergraph $\mathcal{H}_{1}$, in the resulting hypergraph $v$ is only a topvertex and if we give a proper 4-coloring to this hypergraph, then it is a proper 4-coloring of $\mathcal{H}_{1}$ as well. 
	Similarly, we can assume that apart from $c$ and $d$, there is no vertex of $\mathcal{H}_{2}$ which is both a topvertex and a bottomvertex of $\mathcal{H}_{2}$.

The hyperedges intersecting $Y$ and $X$ in exactly one vertex have the following useful property.
\begin{claim}\label{non-crossing}
	Let $H_{1}=\{u_{1},v_{1}\}$ and $H_{2}=\{u_{2},v_{2}\}$ two hyperedges of $\mathcal{H}$ such that $u_{1},v_{1},u_{2},v_{2}$ are distinct vertices, $u_{1},u_{2}\in Y$, $v_{1},v_{2}\in X$ and $u_{1}$ has smaller  index than $u_{2}$. Suppose that either $F_{H_{1}},F_{H_{2}}\in \mathcal{F}$ or $F_{H_{1}},F_{H_{2}}\in \mathcal{\overline{F}}$. Then $v_{1}$ has larger index than $v_{2}$.
\end{claim}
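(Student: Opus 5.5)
Since $F_H = (H\cap Y)\cup (X\setminus (H\cap X))$, I will translate the claim into a statement about $F_1:=F_{H_1}$ and $F_2:=F_{H_2}$ and then apply the $ABA$-free property. Assume first that $F_1,F_2\in\mathcal{F}$. Then
\[
F_i\cap Y=\{u_i\}, \qquad F_i\cap X = X\setminus\{v_i\} \qquad (i=1,2).
\]
Since $u_1,u_2,v_1,v_2$ are distinct, we have
\[
u_1\in F_1\setminus F_2,\quad u_2\in F_2\setminus F_1,\quad v_2\in F_1\setminus F_2,\quad v_1\in F_2\setminus F_1 .
\]
Indeed, $v_2\ne v_1$ gives $v_2\in F_1$, and $v_2\notin F_2$ by definition; symmetrically for $v_1$.

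Now suppose, for contradiction, that $v_1$ has smaller index than $v_2$. Together with $u_1<u_2$, we get four distinct vertices in which $u_1,v_2$ belong to $A:=F_1\setminus F_2$ and $u_2,v_1$ belong to $B:=F_2\setminus F_1$. In the first elements $u_1<u_2$, the $A$-element precedes the $B$-element. In the pair $v_1<v_2$, the $B$-element precedes the $A$-element. A short case analysis on the relative order of the four points should always produce either a pattern $A\,B\,A$ or a pattern $B\,A\,B$. Either one contradicts the $ABA$-freeness of $\mathcal{F}$, since the definition is symmetric in the roles of the two hyperedges.

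The key step is to make this case analysis clean. I would argue as follows. Let $p=\min(u_1,v_1)$ be the smaller of the two $B$-side-first candidates, which is more simply handled by looking at the sorted sequence of labels $A/B$ of the four points. This sequence contains two $A$'s and two $B$'s. The only arrangements with no $ABA$ and no $BAB$ subsequence are $AABB$ and $BBAA$. In $AABB$, every $A$-element precedes every $B$-element, which contradicts $v_1<v_2$ with $v_1\in B$ and $v_2\in A$. In $BBAA$, every $B$-element precedes every $A$-element, which contradicts $u_1<u_2$. Hence $v_1$ has larger index than $v_2$.

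For the case $F_1,F_2\in\overline{\mathcal{F}}$, write $F_i=V\setminus G_i$ with $G_i\in\mathcal{F}$. Then $G_1\setminus G_2=F_2\setminus F_1$, so the same membership pattern holds with the roles of $A$ and $B$ swapped. Since the forbidden configuration is symmetric in $A$ and $B$, the same argument applies verbatim.

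I expect no real obstacle. The main care needed is the bookkeeping of which of the four vertices lies in which difference set. In particular, distinctness is exactly what puts $v_2$ in $F_1$ and $v_1$ in $F_2$.
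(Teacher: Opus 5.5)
Your proof is correct and follows the paper's argument. You use the same membership bookkeeping ($u_1,v_2\in F_{H_1}\setminus F_{H_2}$, $u_2,v_1\in F_{H_2}\setminus F_{H_1}$) and the same $ABA$-freeness contradiction; the paper splits on whether $u_1<v_1$ (triple $u_1<v_1<v_2$) or $v_1<u_1$ (triple $v_1<u_1<u_2$) instead of enumerating the six $A/B$ words. Delete the stray half-sentence introducing $p=\min(u_1,v_1)$: it is unused and wrongly calls $u_1$ a $B$-side element.
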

\begin{proof}
Assume for a contradiction that $u_{1}$ has smaller index than $u_{2}$ and $v_{1}$ has smaller index than $v_{2}$. If $u_{1}$ has smaller index than $v_{1}$, then $u_{1}<v_{1}<v_{2}$, while $u_{1},v_{2}\in F_{H_{1}}\setminus F_{H_{2}}$ and $v_{1}\in F_{H_{2}}\setminus F_{H_{1}}$, so $\mathcal{F}$ or $\mathcal{\overline{F}}$ is not $ABA$-free, this is a contradiction. Similarly, if $u_{1}$ has larger index than $v_{1}$, then $v_{1}<u_{1}<u_{2}$ and $v_{1},u_{2}\in F_{H_{2}}\setminus F_{H_{1}}$, while $u_{1}\in F_{H_{1}}\setminus F_{H_{2}}$, so $\mathcal{F}$ or $\mathcal{\overline{F}}$ is not $ABA$-free, this is also a contradiction. This implies that $v_{1}$ has larger index than $v_{2}$.
\end{proof}
If $H_{1}$ and $H_{2}$ satisfy the conditions of Claim \ref{non-crossing} then we say that $H_{1}$ and $H_{2}$ satisfy the \emph{non-crossing property}.

We give a graph $G$ and a mapping $f:V(\mathcal{H})\rightarrow V(G)$ such that for every hyperedge of $\mathcal{H}$ its image contains the vertices of at least one edge of the graph $G$. It implies that if $G$ admits a proper $4$-coloring, then $\mathcal{H}$ is also proper $4$-colorable (given by $f^{-1}$). We take the extremal vertices of $\mathcal{H}_{1}$ and $\mathcal{H}_{2}$ and the vertex $n_{i}$ will represent the non-extremal vertices of $\mathcal{H}_{i}$ where $i=1,2$. 
The edge set of $G$ contains the following edges
(recall that $a$ is the first vertex and $b$ is the last vertex of $\mathcal{H}_{1}$ and $c$ is the first vertex and $d$ is the last vertex of $\mathcal{H}_{2}$): 
\begin{itemize}
	\item $uv\in E(G)$ if $u$ and $v$ are consecutive topvertices of $\mathcal{H}_{i}$ where $i=1,2$
	\item $uv\in E(G)$ if $u$ and $v$ are consecutive bottomvertices of $\mathcal{H}_{i}$ where $i=1,2$
	\item $n_{1}v\in E(G)$ if $v$ is an extremal vertex of $\mathcal{H}_{1}$
	\item $n_{2}v\in E(G)$ if $v$ is an extremal vertex of $\mathcal{H}_{2}$
	\item $ad,bc\in E(G)$
	\item $uv\in E(G)$ if $u$ is a vertex of $\mathcal{H}_{1}$, $v$ is a vertex of $\mathcal{H}_{2}$ and $\{u,v\}$ is a hyperedge of $\mathcal{H}$ (By Claim \ref{top-bottom}, we know that $u$ is a topvertex of $\mathcal{H}_{1}$ and $v$ is a bottomvertex of $\mathcal{H}_{2}$ or $u$ is a bottomvertex of $\mathcal{H}_{1}$ and $v$ is a topvertex of $\mathcal{H}_{2}$.)
\end{itemize}

\begin{figure}[h!]
	\centering
	\includegraphics[width=0.8\textwidth]{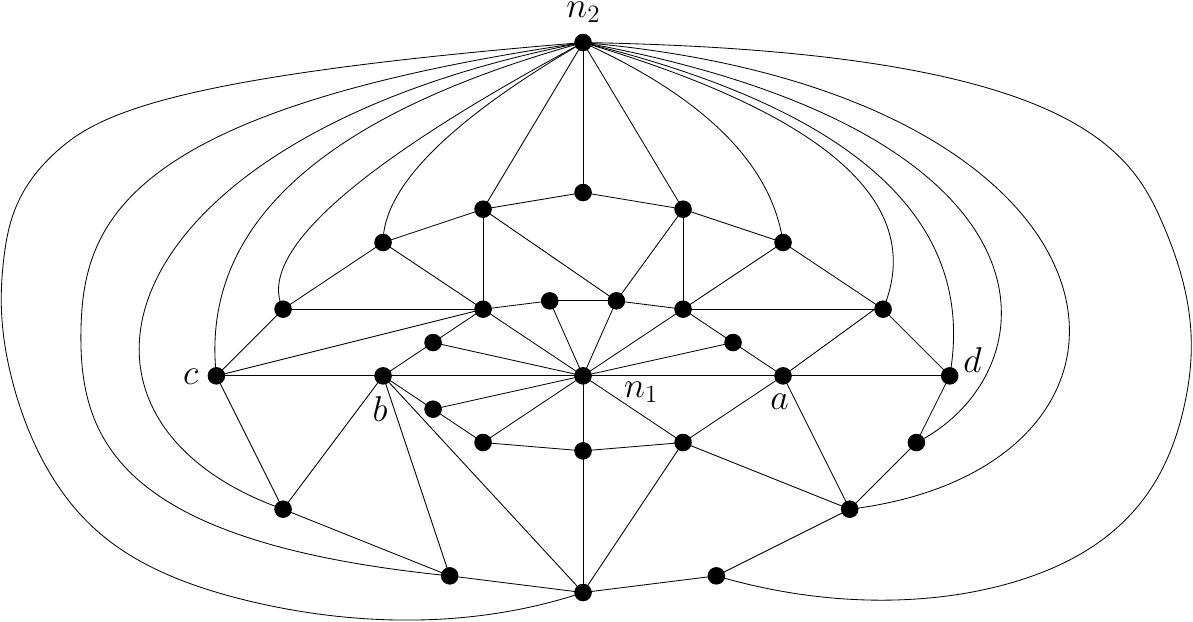}
	\caption{Planar drawing of $G$}
	\label{G plane}
	
\end{figure}

Figure \ref{G plane} shows a planar drawing of $G$. The upper $a-b$ path contains the topvertices of $\mathcal{H}_{1}$ and the lower one contains the bottomvertices of $\mathcal{H}_{1}$. The upper $c-d$ path contains the bottomvertices of $\mathcal{H}_{2}$ and the lower one contains the topvertices of $\mathcal{H}_{2}$. The absence of edge crossings follows from Claim \ref{non-crossing}. It follows from the Four Color Theorem that $G$ is proper $4$-colorable. \newline
Let $H$ be a hyperedge of $\mathcal{H}$. If $H$ intersects $Y$ (resp. $X$) in at least two vertices, then it follows from Claim \ref{unskippable_vertex} that $H$ contains an extremal vertex. If $H$ contains a second extremal vertex of $\mathcal{H}_{1}$ (resp. $\mathcal{H}_{2}$), then it follows from Theorem \ref{pseudohalfplane} that $H$ contains two consecutive topvertices or two consecutive bottomvertices of $\mathcal{H}_{1}$ (resp. $\mathcal{H}_{2}$), so $H$ contains an edge of $G$. If $H$ does not contain a second extremal vertex of $\mathcal{H}_{1}$ (resp. $\mathcal{H}_{2}$), then $H$ contains a non-extremal vertex $v$ and it follows from Theorem \ref{pseudohalfplane} that $H$ must contain at least one of the extremal vertices before and after $v$ that are closest to $v$, so $H$ contains an edge of $G$.
If $H$ intersects $Y$ and $X$ in exactly one vertex, then it also contains an edge of $G$, which connects a vertex on the upper $a-b$ path to a vertex on the upper $c-d$ path or a vertex on the lower $a-b$ path to a vertex on the lower $c-d$ path. Thus, every hyperedge of $\mathcal{H}$ contains an edge of $G$. Since $G$ admits a proper $4$-coloring, we proved that $\mathcal{H}$ is proper $4$-colorable.

\end{proof}

\section{Acknowledgment}
I am grateful to Balázs Keszegh for the valuable discussions, helpful ideas and for reading this manuscript.

\end{document}